\newcommand{\B}[1]{{\mathbf #1}}
\newcommand{\C}[1]{{\mathcal #1}}
\newtheorem{theorem}{Theorem}[section]
\newtheorem{theorem*}{Theorem}
\newtheorem{proposition}[theorem]{Proposition}
\newtheorem*{question*}{Question}
\theoremstyle{definition}
\newtheorem{remark}[theorem]{Remark}
\newtheorem*{remark*}{Remark}
\newtheorem*{remarks*}{Remarks}
\newtheorem*{corollary*}{Corollary}
\numberwithin{figure}{section}
\numberwithin{table}{section}
\numberwithin{equation}{section}
\def\B{\mathbf}
\newcommand{\OP}{\operatorname}
\begin{document}

\title{On the entropy norm on $\OP{Ham}(S^2)$}
\author{Michael Brandenbursky}
\author{Egor Shelukhin}
\address{Ben Gurion University of the Negev, Israel}
\email{brandens@math.bgu.ac.il}
\address{University of Montreal, Canada}
\email{egorshel@gmail.com}
%\thanks{thanks} 
\keywords{Hamiltonian diffeomorphisms, topological entropy, quasimorphisms, braid groups, conjugation-invariant norms}
\subjclass[2000]{Primary 53; Secondary 57}

\begin{abstract}
In this note we prove that for each positive integer $m$ there exists a bi-Lipschitz embedding
$\B Z^m\to\OP{Ham}(S^2)$, where $\OP{Ham}(S^2)$ is equipped with the entropy metric. 
In particular, the same result holds when the entropy metric is substituted with the autonomous metric.
\end{abstract}

\maketitle

%\setcounter{tocdepth}{1}
%\tableofcontents

\section{Introduction}\label{S:intro} 
Let $S^2$ be the standard 2-sphere and $\OP{Ham}(S^2)$ the group of Hamiltonian diffeomorphisms of $S^2$.
There exist several unbounded bi-invariant metrics on $\OP{Ham}(S^2)$. The most notable are
the Hofer metric, the autonomous metric and the entropy metric, see e.g. \cite{MR3391653, BM-ent, MR99b:58037}. The following question
due to Kapovich and Polterovich is widely open and seems to be quite difficult:
"Is $\OP{Ham}(S^2)$ equipped with Hofer metric quasi-isometric to $\B R$?"

Let $\OP{Ent}(S^2)\subset \OP{Ham}(S^2)$ be the set of topological entropy-zero diffeomorphisms.
This set is conjugation invariant and it generates $\OP{Ham}(S^2)$, since this group is simple \cite{MR1445290}.  
In other words, each diffeomorphism in $\OP{Ham}(S^2)$ is a finite product of entropy-zero diffeomorphisms. 
One may ask for a minimal decomposition and this question
leads to the concept of the entropy norm which we define by
$$
\|f\|_{\OP{Ent}}:=\min\{k\in\B N\,|\,f=h_1\cdots h_k,\,h_i\in\OP{Ent}(S^2)\}.
$$
It is the word norm associated with the generating set $\OP{Ent}(S^2)$.
The entropy norm is conjugation-invariant since $\OP{Ent}(S^2)$ is. 
The associated bi-invariant metric is denoted by $\B d_{\OP{Ent}}$.

In this note we answer in negative the Kapovich-Polterovich question with respect to the entropy metric and hence
with respect to the autonomous metric. The main result of this paper is the following

\begin{theorem*}\label{T:Lip-emb}
For each $m\in\B N$ there exists a bi-Lipschitz embedding 
$$\B Z^m\hookrightarrow (\OP{Ham}(S^2), \B d_{\OP{Ent}}),$$
where $\B Z^m$ is endowed with the $l^1$-metric.
\end{theorem*}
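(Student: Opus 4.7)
The strategy is the standard one for bi-Lipschitz $\B Z^m$-embeddings into groups equipped with a conjugation-invariant norm: produce $m$ pairwise commuting elements and $m$ quasimorphisms that detect them linearly independently and are bounded on the generating set. Concretely, I would choose $m$ pairwise disjoint open discs $U_1,\dots,U_m$ in $S^2$ and, in each $U_i$, pick a Hamiltonian diffeomorphism $f_i$ compactly supported in $U_i$. Because the supports are disjoint, the $f_i$ pairwise commute, so $\B n\mapsto \Phi(\B n)=f_1^{n_1}\cdots f_m^{n_m}$ defines a homomorphism $\B Z^m\to\OP{Ham}(S^2)$. The upper Lipschitz bound is then immediate from the subadditivity of a word norm: $\|\Phi(\B n)\|_{\OP{Ent}}\le \sum_i|n_i|\cdot\|f_i\|_{\OP{Ent}}$, and each $\|f_i\|_{\OP{Ent}}$ is finite because $\OP{Ent}(S^2)$ generates $\OP{Ham}(S^2)$.

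For the lower bound I would construct $m$ homogeneous quasimorphisms $\Psi_1,\dots,\Psi_m\colon\OP{Ham}(S^2)\to\B R$ satisfying (i) each $\Psi_i$ is bounded on $\OP{Ent}(S^2)$, and (ii) the matrix $(\Psi_i(f_j))_{i,j}$ is invertible. Property (i) together with the quasimorphism defect yields the Lipschitz estimate $|\Psi_i(g)|\le K_i\cdot\|g\|_{\OP{Ent}}$ for every $g$. Because the $f_j$ commute, each $\Psi_i$ restricts to a homomorphism on the subgroup they generate, so inverting the matrix from (ii) produces the desired inequality $\|\B n\|_1\le C'\|\Phi(\B n)\|_{\OP{Ent}}$. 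The $\Psi_i$ I would obtain by a Gambaudo--Ghys construction localized in $U_i$: pick a base configuration $Q_i\subset U_i$ of $k\ge 2$ distinct points, and feed the braid in $S^2$ traced out by $Q_i$ along a Hamiltonian isotopy from $\OP{id}$ to $g$ into a fixed homogeneous quasimorphism on the pure braid group $P_k(S^2)$, averaging over a family of configurations in $U_i$ to eliminate dependence on the choice of isotopy. Since $f_j$ acts as the identity on $U_i$ for $j\ne i$, the resulting braid is trivial, so $\Psi_i(f_j)=0$; the matrix is therefore diagonal, and choosing each $f_i$ so that the braid it induces on $Q_i$ is detected by the braid group quasimorphism makes every diagonal entry nonzero.

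The principal obstacle is property (i): showing that these Gambaudo--Ghys quasimorphisms are bounded on \emph{every} topological entropy-zero Hamiltonian diffeomorphism, not just on the much smaller class of autonomous diffeomorphisms (where boundedness is classical). The required estimate must come from a dynamical argument relating the complexity, under iteration, of the braid traced out by a point configuration under an entropy-zero isotopy to the (vanishing) topological entropy of the time-one map, so that an appropriate homogeneous braid quasimorphism—for instance one built from the signature cocycle or from Malyutin-type invariants detecting pseudo-Anosov growth—stays bounded. This is the key technical ingredient one expects to import from \cite{BM-ent}; the rest of the argument is formal linear algebra combined with the quasimorphism Lipschitz estimate.
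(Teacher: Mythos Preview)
Your overall strategy is the right one, and you correctly identify that the crux is producing homogeneous quasimorphisms on $\OP{Ham}(S^2)$ that vanish on $\OP{Ent}(S^2)$ and separate the $f_i$. However, the specific construction you propose for the $\Psi_i$ does not work. A ``localized'' Gambaudo--Ghys integral $g\mapsto\int_{\OP X_k(U_i)}\varphi(\gamma(g,x))\,dx$ is \emph{not} a quasimorphism on $\OP{Ham}(S^2)$. In the coboundary computation one has $\gamma(gh,x)=\gamma(g,x)\cdot\gamma(h,g(x))$ up to a word of uniformly bounded length, so after applying $\varphi$ and integrating one must compare $\int_{\OP X_k(U_i)}\varphi(\gamma(h,g(x)))\,dx=\int_{g(\OP X_k(U_i))}\varphi(\gamma(h,y))\,dy$ with $\int_{\OP X_k(U_i)}\varphi(\gamma(h,y))\,dy$. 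A generic $g\in\OP{Ham}(S^2)$ does not preserve $U_i$, so these differ by an unbounded amount. The Gambaudo--Ghys argument needs the full configuration space precisely so that the change of variables $y=g(x)$ leaves the domain of integration invariant. (The averaging is not there ``to eliminate dependence on the isotopy''---that is handled by $\pi_1(\OP{Ham}(S^2))\cong\B Z/2$ together with homogenization---but to secure the quasimorphism property.) Consequently your mechanism for getting a diagonal matrix $(\Psi_i(f_j))$ from disjoint supports collapses; and note that if instead you use a single global $\Psi$ and take the $f_i$ mutually conjugate in disjoint discs, conjugation-invariance forces $\Psi(f_1^{k_1}\cdots f_m^{k_m})=\Psi(f_1)\sum k_i$, which detects only one direction.

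The paper proceeds differently. It uses only \emph{global} Gambaudo--Ghys quasimorphisms built from Bestvina--Fujiwara classes $\psi\in Q_{\OP{BF}}(B_n(S^2))$ vanishing on reducible braids; it is for this specific family that \cite{BM-ent} proves vanishing on $\OP{Ent}(S^2)$. Since this space is infinite-dimensional and $\C G_n\circ\iota_n^*$ is injective, elementary linear algebra (\cite[Lemma~3.10]{MR3044593}) yields $m$ quasimorphisms $\overline{\Phi}_{n,i}$ and $m$ diffeomorphisms $f_{n,j}$, all supported in one disc, with $\overline{\Phi}_{n,i}(f_{n,j})=\delta_{ij}$. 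The nontrivial point---and the content of Proposition~\ref{P:disc}---is that this disc can be taken of area $<1/m$: a scaling argument, using that $\gamma(g,x)$ is reducible whenever two strands of $x$ lie outside $\OP{supp}(g)$, shows that such global quasimorphisms still detect diffeomorphisms with arbitrarily small support. Only then are the $f_{n,j}$ conjugated into disjoint discs to make them commute; the Kronecker-delta relations survive because homogeneous quasimorphisms are conjugation-invariant.
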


In particular, the above result implies that $\OP{Ham}(S^2)$ equipped with either the entropy
or the autonomous metric is not a hyperbolic space.

\begin{remark}
The above theorem for $\OP{Ham}(D^2)$, where $D^2$ is a unit disc in the Euclidean plane, 
was recently proved in \cite{BM-ent}. Also, the above theorem for the Hofer metric and $\OP{Ham}(S_g)$,
where $S_g$ is a closed hyperbolic surface, was proved by Py in \cite{zbMATH05344052}.
For analogous results on other metrics on diffeomorphism groups of surfaces see e.g. \cite{2016-BS}.
\end{remark}

\subsection*{Acknowledgments.}
MB was partially supported by the Simons CRM Scholar-in-Residence Program.
MB wishes to express his gratitude to CRM for the support and excellent working conditions.

ES was supported by an NSERC Discovery Grant, 
by the Fonds de recherche du Quebec - Nature et technologies and Fondation Courtois.

\section{Preliminaries} \label{S:prelim}
In this section we recall the notion of a quasimorphism and describe
the Gambaudo-Ghys construction. Throughout the paper the area of $S^2$ is normalized to be one.

\subsection{Quasimorphisms} Recall that a function $\psi$ from a group $G$ to the reals
is called a quasimorphism if there exists a constant $D\geq 0$
such that $|\psi(g)-\psi(gh)+\psi(h)|<D$ for all $g,h\in G$. 
Minimal such $D$ is called the defect of $\psi$ and denoted by $D_\psi$.
A quasimorphism $\psi$ is homogeneous if $\psi(g^n)=n\psi(g)$ 
for all $n\in\B Z$ and $g\in G$. 
Quasimorphism $\psi$ can be homogenized by setting
$$\overline{\psi} (g) := \lim_{p\to\infty} \frac{\psi (g^p)}{p}.$$
The vector space of homogeneous quasimorphisms on $G$ is denoted by $Q(G)$. 
For more information about quasimorphisms and their 
connections to different branches of mathematics, see \cite{Calegari}.

\subsection{Gambaudo-Ghys construction} In their influential paper \cite{MR2104597} Gambaudo and Ghys 
constructed quasimorphisms on $\OP{Ham}(S)$, where $S$ is either a 2-disc or a 2-sphere, from
quasimorphisms on pure braid group $P_n$ or spherical pure braid group $P_n(S^2)$ respectively (see also \cite{BM-ent}).
The first named author generalized their construction to other surfaces \cite{MR3391653}.
Let us recall the construction.

Let $\{f_t\}$ be an isotopy in $\OP{Ham}(S^2)$ from the identity to $f\in\OP{Ham}(S^2)$
and let $z\in S^2$ be a basepoint. For $y\in S^2$ we define a loop
$\gamma_{y,z}\colon [0,1]\to S^2$ by
$$
\gamma_{y,z}(t):=
\begin{cases}
\alpha_{3t}(z) &\text{ for } t\in \left [0,\frac13\right ]\\
f_{3t-1}(y) &\text{ for } t\in \left [\frac13,\frac23\right ]\\
\beta_{3t-2}(f(y)) & \text{ for } t\in \left [\frac23,1\right ],
\end{cases}
$$
where $\{\alpha_t\}$ is the shortest path on $S^2$ from $z$ to $y$,
and $\{\beta_t\}$ is the shortest path on $S^2$ from $f(y)$ to $z$.

Let $\OP{X}_n(S^2)$ be the configuration space of all ordered $n$-tuples
of pairwise distinct points in $S^2$. Its fundamental group
$\pi_1(\OP{X}_n(S^2))$ is identified with the spherical pure braid group $P_n(S^2)$.
Let $z=(z_1,\ldots,z_n)$ in $\OP{X}_n(S^2)$ be a base point.
For almost every $x=(x_1,\ldots,x_n)\in \OP{X}_n(S^2)$ the
$n$-tuple of loops $(\gamma_{x_1,z_1},\ldots,\gamma_{x_n,z_n})$ is
a based loop in the configuration space $\OP{X}_n(S^2)$.
Let $\gamma(f,x)\in P_n(S^2)=\pi_1(\OP{X}_n(S^2),z)$
be the element represented by this loop, and let
$\varphi\colon P_n(S^2)\to \B R$ be a homogeneous quasimorphism.
Since $\pi_1(\OP{Ham}(S^2))$ is isomorphic to $\B Z_2$, the number
$\varphi(\gamma(f,x))$ does not depend on the choice of the isotopy $\{f_t\}$. 
Define the quasimorphism
$\Phi_n\colon \OP{Ham}(S^2)\to \B R$ and its homogenization
$\overline{\Phi}_n\colon\OP{Ham}(S^2)\to \B R$
by
\begin{equation}\label{eq:GG}
\Phi_n(f):=
\int\limits_{\OP{X}_n(S^2)}\varphi(\gamma(f;{x}))d{x}\qquad\qquad
\overline{\Phi}_n(f):=\lim_{p\to +\infty}\frac{\Phi_n(f^p)}{p}
\thinspace .
\end{equation}

\begin{remark}
The assertion that both the above functions are well defined quasimorphisms is
proved in \cite{MR2104597}.  Using the family of signature quasimorphisms
on the group $P_n(S^2)$ Gambaudo-Ghys showed that  
$\dim(Q(\OP{Ham}(S^2)))=\infty$. This fact was also proved in \cite{BM-ent}.
\end{remark}

\section{Proof of the main result} \label{S:main}

\begin{proposition}\label{P:disc}
Let $m,n\in\B N$ such that $n\geq 4$. Then there exist 
$f\in\OP{Ham}(S^2)$ supported in an embedded disc $D_m\subset S^2$ such
that $\OP{area}(D_m)<\frac{1}{m}$, and a quasimorphism $\overline{\Phi}_n$ as above 
such that $\overline{\Phi}_n(f)\neq 0$.
\end{proposition}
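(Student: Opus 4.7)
The plan is to transplant the disc construction of \cite{BM-ent} into a small embedded disc $D_m\subset S^2$ and to evaluate the Gambaudo-Ghys invariant on $S^2$ using a quasimorphism $\varphi\in Q(P_n(S^2))$ whose pullback to braids in $D_m$ still detects the chosen diffeomorphism.

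First I would fix an embedded disc $D_m\subset S^2$ with $\OP{area}(D_m)<1/m$, placed disjoint from a small neighbourhood of the base configuration $z\in\OP{X}_n(S^2)$, and take $f$ compactly supported in $D_m$. Such an $f$ is equivalent to a compactly supported Hamiltonian diffeomorphism of a disc, and the disc analogue of the proposition, established in \cite{BM-ent}, supplies a choice of $f$ together with a homogeneous quasimorphism $\psi$ on the ordinary pure braid group $P_n$ detecting it. The inclusion $D_m\hookrightarrow S^2$ induces a homomorphism $P_n\cong \pi_1(\OP{X}_n(D_m),z)\to P_n(S^2)$, so pulling back gives a linear map $Q(P_n(S^2))\to Q(P_n)$. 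For $n\geq 4$ the infinite-dimensional space $Q(P_n(S^2))$ provided by the signature quasimorphisms of Gambaudo-Ghys (see the remark above) gives enough room to choose $\varphi$ whose pullback agrees with $\psi$ on the specific braid classes produced by $f$, or at least remains nontrivial on them.

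Next I would evaluate $\overline{\Phi}_n(f)$ by partitioning $\OP{X}_n(S^2)$ according to the number of coordinates of $x$ lying inside $D_m$. For configurations entirely inside $D_m$, the braid $\gamma(f^p,x)\in P_n(S^2)$ agrees, up to a $p$-independent correction coming from the capping paths $\alpha,\beta$, with the image in $P_n(S^2)$ of the disc braid produced by $f^p$; because $\varphi$ is a homogeneous quasimorphism, this correction affects $\varphi(\gamma(f^p,x))$ only by a bounded amount, and the contribution of this stratum to $\Phi_n(f^p)/p$ tends to a positive multiple of the disc Gambaudo-Ghys invariant, which is nonzero by choice. For configurations where some coordinates lie outside $D_m$, the corresponding strands are held constant by $f^p$, so $\gamma(f^p,x)$ lies in a subgroup of reducible braids on which an analogous bounded-correction estimate using the defect of $\varphi$ controls the contribution and shows it does not cancel the main term.

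The principal obstacle is in the second step: the map $P_n\to P_n(S^2)$ has a nontrivial kernel (containing, for instance, powers of the full twist), so not every quasimorphism on $P_n$ descends, and one must verify that the specific class of disc braids from \cite{BM-ent} is detected by some $\varphi$ on the sphere side. The hypothesis $n\geq 4$ is precisely what makes $P_n(S^2)$ large enough, containing nonabelian free subgroups and an infinite-dimensional space of homogeneous quasimorphisms, for this to be feasible; for $n\leq 3$ the group $P_n(S^2)$ is finite and the argument breaks down. A secondary technical point is the uniformity of the bounded-correction estimate in the reducible strata, which is handled strand-by-strand via the quasimorphism defect inequality.
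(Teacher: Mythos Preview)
Your proposal has two substantive gaps that the paper's argument is specifically designed to avoid.

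First, the descent step. You correctly note that the inclusion induces a map $Q(P_n(S^2))\to Q(P_n)$ and then ask for a $\varphi$ on the sphere side whose pullback agrees (on the relevant braids) with a given disc quasimorphism $\psi$. But ``enough room'' is not an argument: the map $P_n\to P_n(S^2)$ has a large kernel, and nothing you have said rules out that the specific $\psi$ coming from \cite{BM-ent} fails to factor. The signature quasimorphisms you invoke are a particular family and do not obviously hit an arbitrary target in $Q(P_n)$. The paper sidesteps this entirely by working from the start with quasimorphisms on $B_n(S^2)$---specifically, Bestvina--Fujiwara quasimorphisms pulled back from $\OP{MCG}(S^2,n)$ via the Birman map---and then using Ishida's injectivity theorem to guarantee some $f_a$ (supported in a disc of possibly large area $a$) with $\overline{\Phi}_n(f_a)\neq 0$. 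No transplantation from the disc theory is needed.

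Second, the strata with points outside $D_m$. Your claim that a ``bounded-correction estimate using the defect'' handles these strata is wrong as stated: for fixed $x$ the braid $\gamma(f^p,x)$ is (up to a bounded word) a $p$-th power, so $\varphi(\gamma(f^p,x))$ typically grows linearly in $p$, and the contribution to $\Phi_n(f^p)/p$ does \emph{not} vanish in the limit. The paper deals with this by choosing $\varphi\in Q_{\OP{BF}}(B_n(S^2))$, which vanishes on reducible braids; since any configuration with at least two points outside the support yields a reducible braid, all such strata contribute exactly zero. That still leaves the stratum with exactly one point outside, whose braid is \emph{not} reducible (a curve enclosing a single puncture is inessential), and this stratum genuinely contributes. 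The paper therefore obtains the explicit two-term formula
\[
\overline{\Phi}_n(f_a)=A+n(1-a)B,\qquad
\overline{\Phi}_n(f_{\varepsilon a})=\varepsilon^{n-1}\bigl(\varepsilon A+n(1-\varepsilon a)B\bigr),
\]
and finishes with a limiting argument in $\varepsilon$: if $B=0$ then $A\neq 0$ and every $\varepsilon$ works, while if $B\neq 0$ the bracket tends to $nB\neq 0$ as $\varepsilon\to 0$. This scaling step is the actual content of the proposition---it is what lets you pass from ``nonzero on some disc'' to ``nonzero on an arbitrarily small disc''---and it is absent from your outline.
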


\begin{proof}
Let $m,n\in\B N$ such that $n\geq 4$, and let $\B X_n(S^2)$ 
be the configuration space of all unordered $n$-tuples of pairwise
distinct points in $S^2$. Recall that the Birman map:
$$
\OP{Push}\colon B_n(S^2)\to\OP{MCG}(S^2,n),
$$
where $B_n(S^2)=\pi_1(\B X_n(S^2),z)$ is the spherical braid group on $n$ strings
and $\OP{MCG}(S^2,n)$ is the mapping class group of the $n$-punctured sphere, 
is defined as follows: 
let $\alpha(t)$, $t\in[0,1]$, be a loop in $\B X_n(S^2)$ based at $z$ 
and $h_t\in\OP{Diff}(S^2)$ an isotopy such that $h_t(z)=\alpha(t)$. 
We define $\OP{Push}(\alpha):=[h_1]$ where $\alpha$ is the braid represented by the loop $\alpha(t)$.
The braid $\alpha$ is called \emph{reducible}
if $\OP{Push}(\alpha)$ is a reducible mapping class.

We denote by $Q_{\OP{BF}}(B_n(S^2))$ the space of homogeneous quasimorphisms
on $B_n(S^2)$ which vanish on reducible braids.
It follows from the celebrated paper by Bestvina and Fujiwara \cite{MR1914565} that
the space $Q_{\OP{BF}}(B_n(S^2))$ is infinite dimensional, see \cite[Section 4.A.]{BM-ent}.
Let $\varphi\in Q_{\OP{BF}}(B_n(S^2))$ and $g\in\OP{Ham}(S^2)$. 
Observe that if $x=(x_1,\ldots,x_n)\in \OP{X}_n(S^2)$ is such that there are $1\leq i<j\leq n$ so that
$x_i$ and $x_j$ lie outside of the support of $g$, then the braid $\gamma(g,x)$
is reducible and hence 

\begin{equation}\label{eq:reducible-braids}
\varphi(\gamma(g,x))=0.
\end{equation}

Let $\iota_n\colon P_n(S^2)\to B_n(S^2)$ be the standard inclusion. 
In \cite{MR3181631} Ishida proved that the composition map
$$
\C G_n\circ\iota_n^*\colon Q(B_n(S^2))\to Q(\OP{Ham}(S^2))
$$
is injective (see also \cite[Section 2.D.]{BM-ent}), 
where 
$$\C G_n\colon Q(P_n(S^2))\to Q(\OP{Ham}(S^2))$$ 
is the map defined by $\C G_n(\varphi)(f)=\overline{\Phi}_n(f)$, 
see equation \eqref{eq:GG}. In particular, the restriction
$$
\C G_n\circ\iota_n^*\colon Q_{\OP{BF}}(B_n(S^2))\to Q(\OP{Ham}(S^2))
$$
is injective. 

Let $\psi\in Q_{\OP{BF}}(B_n(S^2))$ a non-trivial quasimorphism. 
It follows from the paper of Ishida \cite{MR3181631} (see also \cite[Section 2.D.]{BM-ent})
that there exists an embedded disc $D_a\subset S^2$ of area $a$ (it could be very close to one)
and $f_a\in\OP{Ham}(S^2)$ such that the support of $f_a$ is contained in $D_a$
and $\overline{\Phi}_n(f_a)\neq 0$. It follows from equation \eqref{eq:reducible-braids} that
$$
\overline{\Phi}_n(f_a)=\lim_{p\to\infty}\left(\int\limits_{\OP{X}_n(D_a)}
\frac{\psi(\gamma(f^p;x))}{p}\thinspace dx+n(1-a)
\int\limits_{\OP{X}_{n-1}(D_a)}\frac{\psi(\gamma(f^p;x))}{p}\thinspace dx\right).
$$
Set $A:=\int\limits_{\OP{X}_n(D_a)}\displaystyle\lim_{p\to\infty}\frac{\psi(\gamma(f^p;x))}{p}\thinspace dx$
and $B:=\int\limits_{\OP{X}_{n-1}(D_a)}\displaystyle\lim_{p\to\infty}\frac{\psi(\gamma(f^p;x))}{p}\thinspace$.
Thus by the result of Ishida we have
\begin{equation}\label{eq:f_a}
\overline{\Phi}_n(f_a)=A+n(1-a)B\neq 0.
\end{equation}
Moreover, by the construction of Ishida, if we shrink the area of $D_a$ by $\varepsilon$ 
then we get a disc $D_{\varepsilon a}$
of area $\varepsilon a$ and $f_{\varepsilon a}\in\OP{Ham}(S^2)$ 
such that the support of $f_{\varepsilon a}$ is contained in $D_{\varepsilon a}$, and
\begin{equation}\label{eq:shrinking}
\overline{\Phi}_n(f_{\varepsilon a})=\varepsilon^n A+n\varepsilon^{n-1}(1-\varepsilon a)B=
\varepsilon^{n-1}(\varepsilon A+n(1-\varepsilon a)B).
\end{equation}
Note that if $B=0$, then by equation \eqref{eq:f_a} we get that $A\neq 0$,
and hence by equation \eqref{eq:shrinking} we get for each $\varepsilon$ 
that $\overline{\Phi}_n(f_{\varepsilon a})\neq 0$ and the proof follows.
If $B\neq 0$, than 
$$\lim_{\varepsilon\to 0}(\varepsilon A+n(1-\varepsilon a)B)=n B\neq 0.$$
It follows that there exists $\varepsilon$ such that $\varepsilon a<\frac{1}{m}$ and
$\overline{\Phi}_n(f_{\varepsilon a})\neq 0$. We set $D_m:=D_{\varepsilon a}$,
$f:=f_{\varepsilon a}$ and the proof follows.
\end{proof}

Let us continue the proof of the theorem.
It follows from \cite[Theorem 1]{BM-ent} that the subspace
$$\C G_n\circ\iota_n^*(Q_{\OP{BF}}(B_n(S^2)))\subset Q(\OP{Ham}(S^2))$$ 
is infinite dimensional for $n\geq 4$ and that every quasimorphism in this space 
vanishes on the set of entropy-zero diffeomorphisms. It follows from \cite[Lemma 3.10]{MR3044593}
and Proposition \ref{P:disc}
that there exists a family of quasimorphisms 
$\{\overline{\Phi}_{n,i}\}_{i=1}^m\in\C G_n\circ\iota_n^*(Q_{\OP{BF}}(B_n(S^2)))$
and a family of diffeomorphisms $\{f_{n,i}\}_{i=1}^m$ in $\OP{Ham}(S^2)$ such that the support of each $f_{n,i}$
is contained in a disc $D_m$ from Proposition \ref{P:disc}, and
$\overline{\Phi}_{n,i}(f_{n,j})=\delta_{ij}$, where $\delta_{ij}$ is the Kronecker delta.

Since $\OP{area}(D_m)<\frac{1}{m}$ there exists a family of diffeomorphisms 
$\{h_i\}_{i=1}^m$ in $\OP{Ham}(S^2)$
such that $h_i\circ f_{n,i}\circ h_i^{-1}$ and $h_j\circ f_{n,j}\circ h_j^{-1}$ 
have disjoint supports for $i\neq j$.
Denote by $\hat{f}_i:=h_i\circ f_{n,i}\circ h_i^{-1}$ and let
$$J\colon\B Z^m\to\OP{Ham}(S^2),$$
where $J(k_1,\ldots,k_m)=\hat{f}_1^{k_1}\ldots\hat{f}_m^{k_m}$.
It is clear that this map is a monomorphism. We prove that it is bi-Lipschitz.
Since all $\hat{f}_i$ commute with each other and $\overline{\Phi}_{n,i}(\hat{f}_j)=\delta_{ij}$,
we obtain
$$
\|\hat{f}_1^{k_1}\circ\ldots\circ\hat{f}_m^{k_m}\|_{\OP{Ent}}\geq\frac{|\overline{\Phi}_{n,i}(\hat{f}_1^{k_1}
\circ\ldots\circ\hat{f}_m^{k_m})|}{D_{\overline{\Phi}_{n,i}}}=\frac{|k_i|}{D_{\overline{\Phi}_{n,i}}}\thinspace,
$$
where $D_{\overline{\Phi}_{n,i}}$ is the defect of the quasimorphism
$\overline{\Phi}_{n,i}$. We denote by $\mathfrak{D}_m:=\max\limits_i D_{\overline{\Phi}_{n,i}}$
and obtain the following inequality
$$
\|\hat{f}_1^{k_1}\circ\ldots\circ\hat{f}_m^{k_m}\|_{\OP{Ent}}
\geq(m\cdot\mathfrak{D}_m)^{-1}\sum_{i=1}^m |k_i|\thinspace .
$$
Denote by $\mathfrak{M}_J:=\max\limits_i\|\hat{f}_i\|_{\OP{Ent}}$. 
Now we have the following inequality
$$
\|\hat{f}_1^{k_1}\circ\ldots\circ \hat{f}_m^{k_m}\|_{\OP{Ent}}
\leq\sum_{i=1}^m |k_i|\cdot\|\hat{f}_i\|_{\OP{Ent}}
\leq\mathfrak{M}_J\cdot\sum_{i=1}^m |k_i|\thinspace .
$$
Last two inequalities conclude the proof of the theorem. \qed

%\nocite{*}
\bibliography{bibliography}
\bibliographystyle{plain}

\end{document}